\newtheorem{theorem}{Theorem}
\theoremstyle{plain}
\newtheorem{conjecture}[theorem]{Conjecture}
\newtheorem{proposition}[theorem]{Proposition}
\newtheorem{corollary}[theorem]{Corollary}
\newtheorem{claim}[theorem]{Claim}
\theoremstyle{definition}
\newtheorem{question}[theorem]{Question}
\newenvironment{dedication}
{\begin{quotation}\begin{center}\begin{em}}
{\end{em}\end{center}\end{quotation}}
\title{\bf On minimal triangle-free 6-chromatic graphs}
\author{Jan Goedgebeur\thanks{Supported by a Postdoctoral Fellowship of the Research Foundation Flanders (FWO).}\\
\small Department of Applied Mathematics, Computer Science and Statistics\\[-0.8ex]
\small Ghent University\\[-0.8ex]
\small Krijgslaan 281-S9,\\[-0.8ex]
\small 9000 Ghent, Belgium\\[-0.8ex]
\small\tt jan.goedgebeur@ugent.be\\
\\
}
\date{\dateline{XX}{XX}\\
\small Mathematics Subject Classifications: 05C30, 05C85, 68R10, 90-04}
\begin{document}

\maketitle
\begin{dedication}
\vspace*{-2em}
In loving memory of Ella.
\end{dedication}

\begin{abstract}
A graph with chromatic number $k$ is called \textit{$k$-chromatic}. Using computational methods, we show that the smallest triangle-free 6-chromatic graphs have at least 32 and at most 40 vertices. 

We also determine the complete set of all triangle-free 5-chromatic graphs up to 24 vertices. This implies that Reed's conjecture holds for triangle-free graphs up to at least this order. We also establish that the smallest regular triangle-free 5-chromatic graphs have 24 vertices.
Finally, we show that the smallest 5-chromatic graphs of girth at least 5 have at least 29 vertices and that the smallest 4-chromatic graphs of girth at least 6 have at least 25 vertices.

  \bigskip\noindent \textbf{Keywords:} (maximal) triangle-free graph, chromatic number, Folkman number, Reed's conjecture, exhaustive generation
\end{abstract}


\section{Introduction}
\label{section:intro}

Throughout this paper, all graphs are simple and undirected. The \textit{chromatic number} of a graph $G$, denoted by $\chi(G)$, is the minimum number of colours required to colour the vertices of $G$ such that no two adjacent vertices have the same colour. A graph $G$ with $\chi(G) = k$ is called \textit{$k$-chromatic}. Let $\delta(G)$ and $\Delta(G)$ denote the minimum and maximum degree of $G$, respectively, or just $\delta$ and $\Delta$ if $G$ is clear from the context. The \textit{girth} of a graph is the length of its shortest cycle.

A triangle-free $k$-chromatic graph of order $n$ is called a $(k,n)$-graph, a $(k,n,d)$-graph a $(k,n)$-graph with $\Delta = d$ and a $(k,n, \leq d)$-graph a $(k,n)$-graph with $\Delta \leq d$. Finally, $n(k)$ is defined as the number of vertices of the smallest triangle-free $k$-chromatic graph. Note that $n(k)$ is in fact equal to the value of the vertex Folkman number $F_v(2^{k-1};3)$, see~\cite{xu2016chromatic}.

In~\cite{M55} Mycielski presented a construction which shows that there are triangle-free graphs with arbitrarily large chromatic number, i.e.\ $n(k)$ is well-defined. When the Mycielski construction is applied to a triangle-free $k$-chromatic graph on $n$ vertices, it yields a triangle-free $(k+1)$-chromatic graph on $2n+1$ vertices, hence $n(k+1) \leq 2n(k) + 1$. 

Chv{\'a}tal~\cite{C74} showed that the Gr\"otzsch graph (see Figure~\ref{fig:grotzsch}) -- which is obtained by applying the Mycielski construction to the 5-cycle -- is the smallest triangle-free 4-chromatic graph, so $n(4)=11$.

\begin{figure}[h!t]
	\centering
	\includegraphics[width=0.4\textwidth]{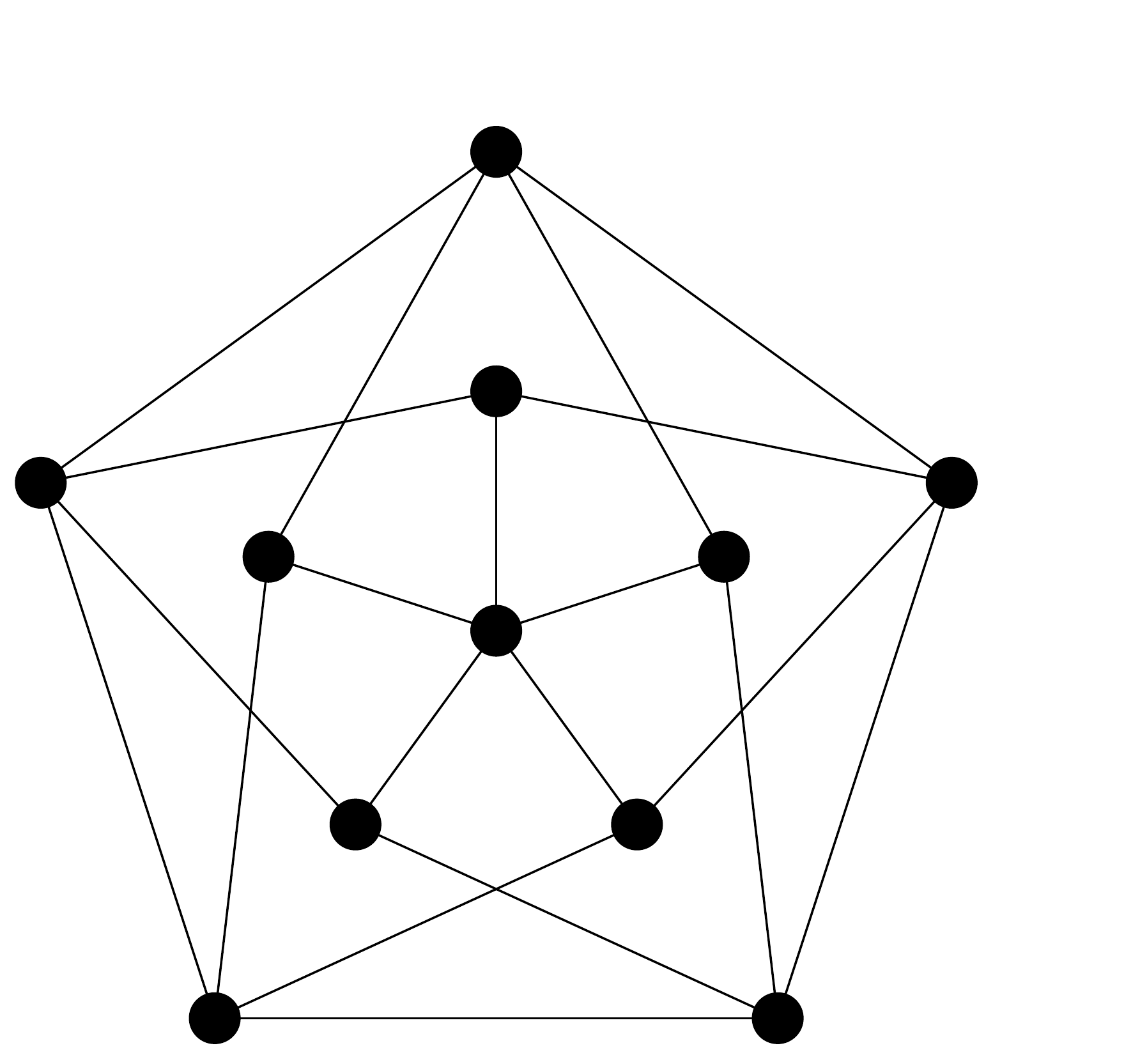}
	\caption{The Gr\"otzsch graph.}
	\label{fig:grotzsch}
\end{figure}


In~\cite{toft1988} Toft asked for the value of $n(5)$. Using a computer search Grinstead, Katinsky and Van Stone~\cite{grinstead1989minimal} showed that $21 \leq n(5) \leq 22$. In~\cite{jensen1995small}, also using a computer search, Jensen and Royle showed that $n(5)=22$ and that there are at least 80 $(5,22)$-graphs.

Later, Jensen and Toft asked~\cite{jensen1995graph} for the value of $n(6)$. Note that applying the Mycielski construction to the smallest triangle-free 5-chromatic graphs found by Jensen and Royle gives an upper bound of $n(6) \leq 45$. This was the best known upper bound until Droogendijk~\cite{droogendijk2015} presented a $(6,44)$-graph in 2015. Later he reported that he also found such graphs on 43 vertices, but did not provide any adjacency lists or details.

Using the technique from Section~\ref{subsect:maxdeg}, the fact that $n(5)=22$ and some maximum degree properties imply that $29 \leq n(6)$.

This paper is organised as follows. In Section~\ref{section:lower_bound} we present two methods for constructing triangle-free $k$-chromatic graphs, that is: the maximum triangle-free method in Section~\ref{subsect:mtf} and the maximum degree extension method in Section~\ref{subsect:maxdeg}. In Section~\ref{subsect:results} we describe how using a combination of these methods allowed us to improve the lower bound for $n(6)$ to 32. 
We also show that if a $(6,32)$-graph exists, it must have maximum degree 6, and that if a $(6,33)$-graph exists, it must have maximum degree 6 or 7.

We determine the complete set of all $(5,n)$-graphs up to $n=24$. This implies that Reed's $\omega$, $\Delta$, and $\chi$ conjecture~\cite{reed1998omega} holds for triangle-free graphs up to at least 24 vertices, see Section~\ref{subsect:results} for details. Next to that, we also determine the smallest regular triangle-free 5-chromatic graphs.

In~\cite{jensen1995graph} Jensen and Toft also asked for the order of the smallest 4-chromatic graph of girth at least 5. 
The Brinkmann graph, which is the smallest 4-regular 4-chromatic graph of girth 5~\cite{brinkmann1997smallest}, gives an upper bound of 21 vertices. Recently, Royle~\cite{royle2015} showed that the smallest 4-chromatic graphs of girth at least 5 have 21 vertices and that there are exactly 18 such graphs. Next to the Brinkmann graph, this set also contains the smallest 4-chromatic $P_{12}$-free graph of girth 5 found by Schaudt and the author~\cite{GS2017}.

In Section~\ref{subsect:results} we determine the complete set of all 4-chromatic graphs of girth 5 on 22 vertices and show that the smallest 4-chromatic graph of girth at least 6 has at least 25 vertices.
By adapting the technique from Section~\ref{subsect:maxdeg} for graphs of girth at least 5, we prove that the smallest 5-chromatic graph of girth at least 5 has at least 29 vertices.

In Section~\ref{section:upper_bound} we describe how, using a heuristic method, we constructed more than 750\,000 $(6,40)$-graphs. In~\cite{jensen1995graph} Jensen and Toft also asked the following question.

\begin{question}[Jensen and Toft~\cite{jensen1995graph}]
Is $n(k+1) \leq 2n(k)$ for all $k\geq 4$?
\end{question}

Recall that $n(5)=22$, so our $(6,40)$-graphs provide a positive answer to this question for $k=5$.

Our main result is:

\begin{theorem}\label{thm:bounds_6chrom}
$32 \leq n(6) \leq 40$.
\end{theorem}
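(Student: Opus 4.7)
The plan splits into two independent computational parts, which I would attack in the order suggested by the theorem statement.

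For the upper bound $n(6)\leq 40$, I would exhibit a single triangle-free 6-chromatic graph on 40 vertices produced by the heuristic construction of Section~\ref{section:upper_bound}: present its adjacency list, verify triangle-freeness by a direct scan of the neighbourhoods of each edge, verify $\chi\leq 6$ by displaying an explicit 6-colouring, and verify $\chi\geq 6$ by running an exact 5-colourability test (most conveniently a SAT encoding). On a graph with 40 vertices and a few hundred edges this last check completes in seconds with modern solvers, so once one candidate is in hand this part is routine.

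For the lower bound $n(6)\geq 32$, assume for contradiction that a $(6,n)$-graph exists with $n\leq 31$ and, by passing to a vertex-critical subgraph, that $G$ itself is 6-critical. Then $\delta(G)\geq 5$, and for any vertex $v$ of maximum degree $\Delta$ the graph $G-v$ is a triangle-free 5-chromatic graph on $n-1\leq 30$ vertices; it cannot be 4-colourable, since any 4-colouring would extend to $v$ using a fresh fifth colour, contradicting $\chi(G)=6$. Moreover, $N(v)$ is an independent set of size $\Delta$ in $G-v$ on which every proper 5-colouring must realise all five colours, otherwise the colouring extends to $v$. This reduces the problem to: for $22\leq m\leq 30$, enumerate the relevant $(5,m)$-graphs $H$ and test whether some ``rainbow-forced'' independent set $I\subseteq V(H)$ yields a $(6,m+1)$-graph when a new vertex is joined to $I$.

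The complete catalogue of $(5,m)$-graphs for $m\leq 24$ is available from Section~\ref{subsect:results}, but for $25\leq m\leq 30$ a full enumeration is hopeless. The decisive step is therefore to combine the maximum triangle-free method of Section~\ref{subsect:mtf} (saturating each candidate to a maximal triangle-free supergraph and exploiting monotonicity of $\chi$ under edge addition) with the maximum degree extension method of Section~\ref{subsect:maxdeg} to bound $\Delta(G)$ a priori. Using $n(5)=22$ together with these degree constraints one expects to force $\Delta(G)\leq 6$ at $n=32$ and $\Delta(G)\in\{6,7\}$ at $n=33$, which in turn bounds $\Delta(G-v)$ and restricts the outer loop to $(5,m,\leq d)$-graphs for small $d$ rather than all $(5,m)$-graphs. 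The main obstacle is combinatorial explosion: taming it requires (i) canonical generation so isomorphic copies are never produced twice, (ii) exploiting the degree bound so that the candidate independent sets $I$ have small bounded size and only degree-restricted $(5,m)$-graphs are enumerated for $m>24$, and (iii) an efficient incremental 5-colourability test in the inner loop where each candidate $H+v$ is examined. Granted these ingredients, the search returns no $(6,n)$-graph for $n\leq 31$, yielding $n(6)\geq 32$ and completing the proof.
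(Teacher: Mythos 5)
Your upper-bound plan is sound and matches the paper: one explicit $(6,40)$-graph (the paper gives an adjacency list in the Appendix, obtained by the heuristic of Section~\ref{section:upper_bound}) plus a triangle-freeness check, an explicit 6-colouring, and an exhaustive non-5-colourability test is all that is needed.

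The lower bound, however, has a genuine gap, and you put your finger on it yourself when you write that enumerating $(5,m)$-graphs for $25\leq m\leq 30$ is hopeless. The problem is your choice of reduction: you delete only the maximum-degree vertex $v$, so from a putative $(6,31)$-graph you land on a 5-chromatic graph with 30 vertices, far outside the enumerated range, and the subsequent appeal to ``a priori degree constraints'' does not repair this --- even granting $\Delta(G)\leq 7$, you would still face $(5,30,\leq 7)$-graphs, and moreover the degree restrictions of Claim~\ref{obs:larger_orders} are \emph{outputs} of the computation at $n=32,33$, not inputs available for $n\leq 31$. The paper's key move is different: first pass to a \emph{maximal} triangle-free $(6,n)$-graph (existence is preserved), then delete the entire closed neighbourhood $N(v)\cup\{v\}$ of a maximum-degree vertex. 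This drops the order by $d+1\geq 7$ (Brooks gives $d\geq 6$), landing on a $(5,n-d-1,\leq d-1)$-graph with at most $24$ vertices --- exactly the fully enumerated range --- and the residual degree cap $\leq d-1$ comes from the diameter-2 property of mtf graphs (every surviving vertex has a neighbour in $N(v)$). For $n=31$ the cases then close quickly: $d=6$ and $d=7$ would require $(5,24,\leq 5)$- and $(5,23,\leq 6)$-graphs, which the tables rule out since all such graphs have $\Delta\geq 7$; $d>8$ would require a $(5,\leq 21)$-graph, contradicting $n(5)=22$; only $d=8$ needs an actual extension computation from the 80 $(5,22)$-graphs. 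Your observation that $N(v)$ must be ``rainbow-forced'' is correct and is implicitly what the extension algorithm exploits (neighbours of $v$ attach to maximal independent sets), but without the closed-neighbourhood reduction your search space is not brought down to anything tractable, so the proof as proposed does not go through.
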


Applying the Mycielski construction to a $(6,40)$-graph yields a $(7,81)$-graph and by applying the technique from Section~\ref{subsect:maxdeg} using the lower bound from Theorem~\ref{thm:bounds_6chrom} we obtain:

\begin{corollary}\label{cor:bounds_7chrom}
$41 \leq n(7) \leq 81$.
\end{corollary}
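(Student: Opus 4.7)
The plan is to prove the two inequalities separately. For the upper bound $n(7)\le 81$, I would invoke the Mycielski construction recalled in the introduction: by Theorem~\ref{thm:bounds_6chrom} a $(6,40)$-graph $G_0$ exists, and its Mycielskian $\mu(G_0)$ is a triangle-free $7$-chromatic graph on $2\cdot 40+1=81$ vertices, giving $n(7)\le 81$ immediately.

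For the lower bound $n(7)\ge 41$, the plan is to apply the maximum degree extension method of Section~\ref{subsect:maxdeg} at level $k=7$, fed with the lower bound $n(6)\ge 32$ from Theorem~\ref{thm:bounds_6chrom}. The shape of that method, as used earlier to deduce $n(6)\ge 29$ from $n(5)=22$, is: take a minimum $(k,n)$-graph $G$ and a vertex $v$ of maximum degree $\Delta$, use the triangle-freeness of $G$ to conclude that $N(v)$ is an independent set, and then show that $H=G-N[v]$ is a triangle-free graph of chromatic number at least $k-1$. This forces $n-\Delta-1\ge n(k-1)$, i.e.\ $n\ge n(k-1)+\Delta+1$. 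Specialising to $k=7$ yields $n(7)\ge 32+\Delta+1$, so it remains to combine this with the lower bound $\Delta\ge 8$ on the maximum degree of a minimum triangle-free $7$-chromatic graph produced by the same max-degree analysis; this gives exactly $n(7)\ge 41$.

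The principal obstacle is the gap of one between the elementary Brooks bound $\Delta\ge\chi=7$ and the sharper $\Delta\ge 8$ needed here. I would close that gap using the structural max-degree restrictions for minimum triangle-free $k$-chromatic graphs developed in Section~\ref{subsect:maxdeg}, in spirit analogous to the observation, already recorded in the introduction, that any hypothetical $(6,32)$-graph must have maximum degree exactly $6$ and any $(6,33)$-graph maximum degree in $\{6,7\}$. Concretely, I would assume $\Delta=7$ for contradiction: then $H=G-N[v]$ would be a triangle-free $6$-chromatic graph on $n-8$ vertices with $\Delta(H)\le 7$, and the allowed max-degree profiles for small $(6,m)$-graphs would rule this out unless $n\ge 41$, which is exactly the conclusion one needs.
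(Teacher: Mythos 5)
Your upper bound argument is exactly the paper's: applying the Mycielski construction to a $(6,40)$-graph from Theorem~\ref{thm:bounds_6chrom} yields a $(7,81)$-graph. The skeleton of your lower bound is also the paper's: pass to an mtf minimum $(7,n)$-graph $G$, delete a maximum-degree vertex $v$ together with $N(v)$, observe that the remaining triangle-free graph has chromatic number at least $6$ and hence at least $32$ vertices, so $n \geq 32 + \Delta + 1$, and it remains to show $\Delta \geq 8$.

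It is precisely at that last step that your proposal has a genuine gap. Suppose $\Delta = 7$ and $n = 40$. Then $H = G \setminus (N(v) \cup \{v\})$ is a triangle-free graph on $32$ vertices with $\chi(H) = 6$ (it cannot be $7$ by minimality of $G$) and, by the mtf property, $\Delta(H) \leq 6$. But Claim~\ref{obs:larger_orders} says exactly that a $(6,32)$-graph, if one exists, must have maximum degree $6$; so the known max-degree profiles are perfectly consistent with such an $H$ and do not rule out this case. The paper even stresses that excluding $(6,32,6)$-graphs reduces to excluding $(5,25,5)$-graphs, which it declares computationally infeasible with its methods --- so the case $\Delta=7$, $n=40$ cannot be closed the way you suggest. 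The paper instead obtains $\Delta \geq 8$ from an external result of Kostochka, cited in Section~\ref{subsect:maxdeg}: every triangle-free graph satisfies $\chi(G) \leq \frac{2\Delta(G)}{3} + 2$, so $\chi(G) = 7$ forces $\Delta(G) \geq 8$. That bound is the missing ingredient; with it, $n(7) \geq 32 + 8 + 1 = 41$ follows as you outlined.
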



\section{Improving the lower bound}
\label{section:lower_bound}
 

\subsection{Maximum triangle-free method}
\label{subsect:mtf}

A \textit{maximal} triangle-free graph (in short, an \textit{mtf graph}) is a triangle-free graph such that the insertion of any new edge forms a triangle. For graphs with at least 3 vertices this is equivalent to being triangle-free and having diameter 2. Note that it is sufficient to restrict our algorithm to the generation of mtf $k$-chromatic graphs since there exists a triangle-free $k$-chromatic graph of order $n$ if and only if there exists an mtf $k$-chromatic graph order $n$ (under the assumption that there are no triangle-free graphs of order $n$ with chromatic number larger than $k$).
 Furthermore, the complete set of $(k,n)$-graphs can be obtained from the the complete set of mtf $(k,n)$-graphs by recursively removing edges in all possible ways, as long as the graphs stay $k$-chromatic.

Together with Brinkmann and Schlage-Puchta~\cite{triangleramsey-site,BGSP12} we designed an efficient algorithm for the generation of all non-isomorphic mtf graphs of a given order. We now used this generator and then applied a filter to its output to determine the $k$-chromatic graphs. Using this method we determined the complete set of all $(5,n)$-graphs up to $n=24$, see Section~\ref{subsect:results} for details. These sets of graphs are vital since they serve as input graphs for the maximum degree extension method in Section~\ref{subsect:maxdeg}.

It is currently computationally infeasible to generate complete sets of triangle-free $k$-chromatic graphs with more than 24 vertices using our mtf method. 

\subsection{Maximum degree extension method}
\label{subsect:maxdeg}


Let $N(v)$ be the neighbourhood of a vertex $v$. Given an $S \subseteq V(G)$, $G \setminus S$ denotes the subgraph of $G$ induced by $V(G) \setminus S$.

It is straightforward to see that the following holds.

\begin{proposition}
Let $G$ be an mtf $(k,n,d)$-graph and let $v$ be a vertex of maximum degree. Then $G' = G \setminus (N(v) \cup \{v\})$ is a $(k-1,n-d-1,\leq d-1)$-graph or a $(k,n-d-1,\leq d-1)$-graph.
\end{proposition}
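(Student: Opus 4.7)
The plan is to verify four attributes of the candidate graph $G'$: the vertex count, triangle-freeness, the maximum degree bound $\Delta(G') \leq d-1$, and the chromatic number being $k-1$ or $k$. The vertex count is immediate: $|V(G')| = n - d - 1$ since $v$ has exactly $d$ neighbours and all of them, together with $v$ itself, are deleted. Triangle-freeness is inherited from $G$ because $G'$ is induced. So the real content lies in bounding $\Delta(G')$ and $\chi(G')$.

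For the maximum degree bound I would use the hypothesis that $G$ is \emph{maximal} triangle-free (not just triangle-free). Fix any $u \in V(G')$. By construction $u \neq v$ and $u \notin N(v)$, so $uv$ is a non-edge of $G$. Maximality says the insertion of $uv$ would create a triangle, i.e.\ $u$ and $v$ share a common neighbour, which necessarily lies in $N(v)$. Hence removing $N(v) \cup \{v\}$ strips $u$ of at least one neighbour, giving $\deg_{G'}(u) \leq \deg_G(u) - 1 \leq d - 1$. Since $u$ was arbitrary, $\Delta(G') \leq d-1$.

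For the chromatic number, the upper bound $\chi(G') \leq \chi(G) = k$ is trivial, since $G'$ is an induced subgraph of $G$. For the lower bound I argue by contradiction: assume $\chi(G') \leq k-2$ and take a proper colouring of $G'$ with colours $\{1,\dots,k-2\}$. I then extend this to a proper $(k-1)$-colouring of $G$ by giving every vertex of $N(v)$ a single new colour $k-1$, which is legitimate because $N(v)$ is an independent set (two adjacent neighbours of $v$ would form a triangle with $v$, contradicting triangle-freeness of $G$), and by giving $v$ a previously used colour, say $1$, which is legitimate because all neighbours of $v$ lie in $N(v)$ and therefore use only colour $k-1$. This produces a proper $(k-1)$-colouring of $G$, contradicting $\chi(G)=k$. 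Hence $\chi(G') \geq k-1$.

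Putting these together, $G'$ is a triangle-free graph on $n-d-1$ vertices with $\Delta(G') \leq d-1$ and $\chi(G') \in \{k-1, k\}$, which is exactly the statement of the proposition. I do not expect a serious obstacle; the only conceptual point needing care is that the degree drop really does require maximality of $G$, not merely triangle-freeness, and that $v$ itself can safely be recoloured with an existing colour since its entire neighbourhood has been moved off into the new colour class.
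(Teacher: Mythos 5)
Your proof is correct, and it supplies exactly the argument the paper leaves implicit (the paper states this proposition with only the remark that it is ``straightforward to see''): maximality of $G$ forces every vertex outside $N(v)\cup\{v\}$ to have a neighbour in $N(v)$, giving $\Delta(G')\leq d-1$, and the recolouring of $N(v)$ with one fresh colour plus reusing an old colour on $v$ gives $\chi(G')\geq k-1$. Nothing further is needed.
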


Note that the neighbours of $v$ in $G$ are connected to maximal independent sets in $G'$. Moreover, for all of our computations it will be known that $(k,n-d-1)$-graphs do not exist so we can restrict ourselves to $(k-1,n-d-1,\leq d-1)$-graphs. 

Our algorithm is similar to the method used by Jensen and Royle in~\cite{jensen1995small} and basically it works as follows. To obtain the complete set of all $(k,n)$-graphs, we first construct the complete sets of all mtf $(k,n,d)$-graphs for every feasible maximum degree $d$. This is done by starting from every $(k-1,n-d-1,\leq d-1)$-graph $H$, adding a vertex $v$ with $d$ neighbours $n_1,...,n_d$ and connecting them to maximal independent sets of $H$ in all possible ways (without increasing the maximum degree) -- see Figure~\ref{fig:gluing}, and testing if the resulting graphs are mtf and $k$-chromatic.

The set of all $(k,n)$-graphs is then obtained by recursively removing edges in all possible ways from these mtf $(k,n)$-graphs, as long as the graphs stay $k$-chromatic. The pseudocode is given in Algorithm~\ref{algo:init} and Algorithm~\ref{algo:construct}.

\begin{figure}[h!t]
	\centering
	\resizebox{0.45\textwidth}{!}{\input{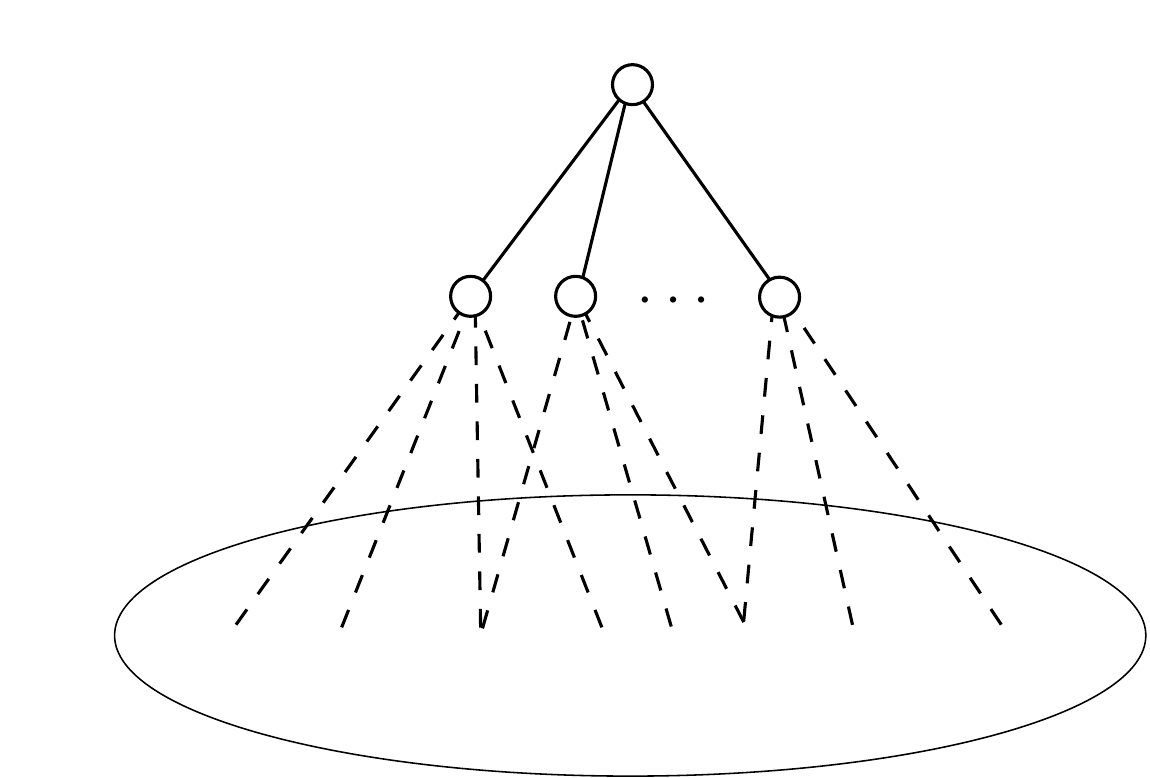_t}}
	\caption{The operation of gluing the neighbours $n_1,...,n_d$ of $v$ to maximal independent sets of $H$ in all possible ways.} 
	\label{fig:gluing}
\end{figure}

\begin{algorithm}[h]
\caption{Generate all mtf $(k,n,d)$-graphs}
\label{algo:init}
  \begin{algorithmic}[1]
  \FOR{every $(k-1,n-d-1,\leq d-1)$-graph $H$}
  		\STATE Compute all maximal independent sets of orders $1,...,d-1$ in $H$.
  		\STATE \verb|Connect_indep_sets|($H, d, d, 0, 0$) \ // i.e.\ perform Algorithm~\ref{algo:construct}
  \ENDFOR
  \end{algorithmic}
\end{algorithm}

\begin{algorithm}[H]
\caption{{\tt Connect\_indep\_sets}(Graph $H$, int $d$, int set\_order, int set\_index, \\ int num\_assigned)
\\ // set\_order = \# neighbours of $n_i$ in $H$
\\ // set\_index = index of the next independent set which could be connected to $n_i$
\\ // num\_assigned = \# $n_i$'s which were already assigned to an independent set in $H$}
\label{algo:construct}
  \begin{algorithmic}
	\IF{num\_assigned == $d$}  
  		\STATE expand $H$ to $G$ \ // as in Figure~\ref{fig:gluing}
  		\IF{$G$ is an mtf $(k,n,d)$-graph}  
  			\STATE output $G$
  		\ENDIF
  	\ELSE
  		\FOR{$j$ = set\_index ; $j <$ num.\ max.\ indep.\ sets of order set\_order ; $j$++}
			\STATE Let $S_j$ be max.\ indep.\ set number $j$ of order set\_order of $H$
			\IF{$S_j$ does not contain any \textit{forbidden} vertices}  
				\STATE assign $S_j$ to $n_{num\_assigned+1}$
				\STATE update \textit{forbidden} vertices
				\STATE \verb|Connect_indep_sets|($H$, $d$, set\_order, $j$, num\_assigned + 1)
				\STATE restore \textit{forbidden} vertices
		    \ENDIF
		\ENDFOR
		\IF{set\_order $> 1$}  
			\STATE \verb|Connect_indep_sets|($H$, $d$, set\_order - 1, 0, num\_assigned)
		\ENDIF
  	\ENDIF
  \end{algorithmic}
\end{algorithm}

The \textit{forbidden} vertices in Algorithm~\ref{algo:construct} denote the vertices that have degree $d$. This helps significantly to reduce the number of possible assignments of the neighbours of $v$. 

Note that Brooks' Theorem~\cite{brooks1941colouring} states that for a connected graph $G$ which is not a complete graph or an odd cycle, we have that $\chi(G) \leq \Delta(G)$, so $d \geq k$.
In~\cite{kostochka1982modification} Kostochka proved that for triangle-free graphs $G$ we have that $\chi(G) \leq \frac{2\Delta(G)}{3}+2$, but for $\chi(G) \leq 6$ this does not yield a better lower bound for $\Delta(G)$ than Brooks' theorem. (For $\chi(G) = 7$ it yields $\Delta(G) \geq 8$, which we use to establish the lower bound $41 \leq n(7)$ in Corollary~\ref{cor:bounds_7chrom}).

Furthermore, star graphs are the only mtf graphs with $\delta = 1$, so we can assume that $\delta \geq 2$. Also note that if we only want to generate \textit{$k$-vertex-critical} graphs (i.e.\ $k$-chromatic graphs for which every proper induced subgraph is $(k-1)$-colourable), we know that $\delta \geq k-1$. For vertex-critical graphs we can also assume that no two neighbours of $v$ are connected to the same independent set, since graphs which contain vertices with identical neighbourhoods cannot be vertex-critical.

In principle the number of possible assignments of the neighbours of $v$ to maximal independent sets can be further reduced by investigating $(k-1)$-colourings of the input graphs and determining which independent sets must be selected to lead to a $k$-chromatic graph. 
But as will be seen in Section~\ref{subsect:results}, this is not useful in practice and it was not implemented in the final algorithm.

Our algorithm can be adapted to generate all $(k,n,d)$-graphs directly of just the mtf ones by starting from $(k-1,n-d-1,\leq d)$-graphs (instead of $(k-1,n-d-1,\leq d-1)$-graphs) and connecting the neighbours of $v$ to independent sets -- not just maximal ones. However, it is much more efficient to use our algorithm to first generate all mtf $(k,n)$-graphs and then recursively remove edges to obtain all $(k,n,d)$-graphs for each $d$.

The algorithm can also be adapted to generate $k$-chromatic graphs of girth at least 5. The set of all $(k,n,d)$-graphs of girth at least 5 can be obtained by applying a modified version of Algorithm~\ref{algo:construct} to all $(k - 1,n - d - 1, \leq d)$-graphs of girth at least 5. Now the neighbours of $v$ are connected to independent sets where the vertices are at distance at least 3 from each other -- otherwise a $C_4$ would be formed. Furthermore, the independent sets to which the neighbours of $v$ are connected must be pairwise disjoint. We use this modified algorithm in Section~\ref{subsect:results} to prove that the smallest 5-chromatic graph of girth at least 5 has at least 29 vertices.

In order not to output any isomorphic copies, we use the program \verb|nauty|~\cite{nauty-website, mckay_14} to compute a canonical form of the graphs and remove isomorphisms. This is not a bottleneck as usually only very few $(k,n,d)$-graphs are generated for the values of $k$ and $n$ in this project.


\begin{corollary}
$29 \leq n(6)$.
\end{corollary}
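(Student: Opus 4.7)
The plan is to prove $n(6) \geq 29$ by contradiction, combining the Proposition from Section~\ref{subsect:maxdeg}, Brooks' theorem, and the known value $n(5) = 22$. Suppose there is a triangle-free $6$-chromatic graph on at most $28$ vertices and let $G$ be such a graph of minimum order $n \leq 28$. I would first argue we may take $G$ to be mtf: by minimality no triangle-free graph on fewer than $n$ vertices is $6$-chromatic, so by deleting one vertex no triangle-free graph on $n$ vertices can be $7$-chromatic; hence greedily adding edges to $G$ until the result is mtf cannot push the chromatic number above $6$.

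Next I would invoke Brooks' theorem. Passing to a $6$-chromatic component makes $G$ connected, and since $G$ is triangle-free it is neither $K_6$ nor an odd cycle, so $\Delta(G) \geq \chi(G) = 6$. Let $v$ be a vertex of maximum degree $d \geq 6$ and apply the Proposition to the mtf $(6,n,d)$-graph $G$: the induced subgraph $G' = G \setminus (N(v) \cup \{v\})$ is either a $(5, n-d-1, \leq d-1)$-graph or a $(6, n-d-1, \leq d-1)$-graph.

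In the first case, $G'$ is a triangle-free $5$-chromatic graph, so $n - d - 1 \geq n(5) = 22$, which together with $d \geq 6$ yields $n \geq 29$, contradicting $n \leq 28$. In the second case, $G'$ is a triangle-free $6$-chromatic graph on $n - d - 1 \leq n - 7 < n$ vertices, contradicting the minimality of $n$. Either way we get a contradiction, so $n(6) \geq 29$.

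There is no real obstacle here: every ingredient is already in hand once the Proposition is available. The only mildly delicate point is justifying the reduction to an mtf graph without appealing to an a priori bound on $n(7)$; that is handled by the minimality observation above, namely that a minimum-order triangle-free $6$-chromatic graph rules out any triangle-free $7$-chromatic graph on the same number of vertices.
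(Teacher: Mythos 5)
Your proof is correct and follows essentially the same route as the paper: Brooks' theorem forces $\Delta(G)\geq 6$, and deleting a maximum-degree vertex together with its neighbourhood leaves a triangle-free graph of chromatic number at least $5$ on at most $21$ vertices, contradicting $n(5)=22$. You are merely more explicit than the paper about two points it elides — the reduction to an mtf graph and the possibility that the residual graph is still $6$-chromatic — both of which you dispatch correctly via the minimal-counterexample setup.
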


\begin{proof}
Let $G$ be a $(6,\leq 28)$-graph. Because of Brooks' Theorem~\cite{brooks1941colouring}, $G$ must have maximum degree at least 6. So removing a vertex of maximum degree and its neighbours yields a $(5, \leq 28 - 6 - 1 = 21)$-graph, but such graphs do not exist since Jensen and Royle~\cite{jensen1995small} showed that $n(5)=22$.
\end{proof}

\subsection{Testing and results}
\label{subsect:results}


Using the generator for mtf graphs from~\cite{triangleramsey-site,BGSP12} we generated all mtf $(5,n)$-graphs up to $n=24$, which required approximately 13 CPU year on a cluster. By recursively removing edges in all possible ways from these graphs, as long as the graphs stay 5-chromatic, we obtained all triangle-free 5-chromatic graphs of these orders.

The counts of these $(5,n)$-graphs are listed in Table~\ref{table:counts_5chrom}. Previously Jensen and Royle~\cite{jensen1995small} showed that there are at least 80 $(5,22)$-graphs and it follows from our results that this is actually the complete set of all $(5,22)$-graphs. 

Table~\ref{table:counts_5chrom_crit} shows the counts of 5-vertex-critical and 5-critical triangle-free graphs. A \textit{$k$-critical} graph is a $k$-chromatic graph for which every proper subgraph is $(k-1)$-colourable.

For completeness and for comparison, we also included the counts of the smallest triangle-free 4-chromatic graphs in Tables~\ref{table:counts_4chrom}-\ref{table:counts_4chrom_crit}. The graphs from Tables~\ref{table:counts_5chrom_crit} and \ref{table:counts_4chrom_crit} can be downloaded from the \textit{House of Graphs}~\cite{hog} at {\small \url{http://hog.grinvin.org/TrianglefreeKChrom}}.

\begin{table}
\centering
\small
 \renewcommand{\arraystretch}{1.15} 
	\begin{tabular}{c  crrrrrr r}
		 \multirow{2}{*}{$n$} & \multicolumn{8}{c}{Maximum degree} \\
		\cline{2-9}
		  & $<7$ & $7$ & $8$ & $9$ & $10$ & $11$ & $12$ & $>12$ \\
		 \hline
		22 & 0 & 8 & 70 & 2 & 0 & 0 & 0 & 0\\
		23 & 0 & 16 033 & 257 922 & 41 067 & 434 & 1 & 0 & 0\\
		24 & 0 & 3 735 593 & 687 757 507 & 327 307 106 & 11 219 245 & 58 283 & 28 & 0\\		 
	\end{tabular}

\caption{The counts of the smallest $(5,n)$-graphs according to their maximum degree.}
\label{table:counts_5chrom}
\end{table}

\begin{table}
\centering
\small
 \renewcommand{\arraystretch}{1.15} 
	\begin{tabular}{c  rrrr }
		 $n$ & all & \#\,vertex-crit. & \#\,crit. & \#\,mtf\\
		 \hline
		22 & 80 & 80 & 21 & 15 \\
		23 & 315 457 & 154 899 & 4 192 & 2 729 \\
		24 &  1 030 077 762 & 212 827 777 & 625 812 & 369 360 \\		
	\end{tabular}

\caption{The counts of all, vertex-critical, critical and mtf $(5,n)$-graphs, respectively.}
\label{table:counts_5chrom_crit}
\end{table}

\begin{table}
\centering
\small
 \renewcommand{\arraystretch}{1.15} 
	\begin{tabular}{c  crrrrr  r }
		 \multirow{2}{*}{$n$} & \multicolumn{6}{c}{Maximum degree} & \multirow{2}{*}{Total} \\
		\cline{2-7}
		  & $4$ & $5$ & $6$ & $7$ & $8$ & $9$ &  \\
		 \hline
11 & 0 & 1 & 0 & 0 & 0 & 0 &  1 \\
12 & 3 & 18 & 3 & 0 & 0 & 0 & 24 \\
13 & 12 & 814 & 272 & 12 & 0 & 0 &  1 110 \\
14 & 46 & 39 843 & 34 041 & 2 291 & 40 & 0 &  76 261\\
15 & 168 & 1 891 843 & 4 059 278 & 495 873 & 14 099 & 125 & 6 461 386 \\
	\end{tabular}

\caption{The counts of the smallest $(4,n)$-graphs according to their maximum degree.}
\label{table:counts_4chrom}
\end{table}

\begin{table}
\centering
\small
 \renewcommand{\arraystretch}{1.15} 
	\begin{tabular}{c  rrrr }
		 $n$ & all & \#\,vertex-crit. & \#\,crit. & \#\,mtf\\
		 \hline
11 & 1 & 1 & 1 & 1 \\
12 &  24 & 4 & 2 & 5 \\
13 & 1 110 & 31 & 13 & 25 \\
14 &  76 261 & 1 080 & 208 & 151 \\
15 &  6 461 386 & 49 015 & 5 039 & 1 019 \\
	\end{tabular}

\caption{The counts of all, vertex-critical, critical and mtf $(4,n)$-graphs, respectively.}
\label{table:counts_4chrom_crit}
\end{table}


In~\cite{chvatal1970smallest} Chv{\'a}tal determined the smallest 4-regular triangle-free 4-chromatic graph. It has 12 vertices and is also known as the Chv{\'a}tal graph. 
In~\cite{jensen1995graph} Jensen and Toft asked if there exists a 5-regular 5-chromatic triangle-free graph (i.e.\ a special case of Gr\"unbaum's girth problem~\cite{grunbaum1970problem}). We are unable to answer this question, but we can show the following.

\begin{claim}\label{obs:smallest_regular}
The smallest regular triangle-free 5-chromatic graphs have 24 vertices. There are exactly 63 such graphs and all of them are 7-regular.
\end{claim}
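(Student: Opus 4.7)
The plan is to exploit the complete enumerations of triangle-free $5$-chromatic graphs already established in this section, combined with Brooks' theorem and a parity argument, and then perform a direct filtering pass. Since $n(5) = 22$, any regular triangle-free $5$-chromatic graph has at least $22$ vertices, so it suffices to inspect the $(5,n)$-graphs for $n \in \{22, 23, 24\}$, whose complete lists are summarised in Tables~\ref{table:counts_5chrom} and~\ref{table:counts_5chrom_crit}.

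First I would handle $n = 22$ and $n = 23$. A $d$-regular graph on $n$ vertices requires $nd$ to be even, so for $n = 23$ the degree $d$ must be even. By Brooks' theorem, $d \geq 5$. Glancing at Table~\ref{table:counts_5chrom} one sees that every $(5,22)$- and $(5,23)$-graph has maximum degree at least $7$, so the only candidate regular degrees are $d = 7$ for $n = 22$ and $d \in \{8, 10\}$ for $n = 23$. I would then iterate over the $(80)$ and $(315{,}457)$ graphs in these two sets, testing the degree sequence of each; the expected outcome is that none is regular, which would eliminate these orders.

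Next I would perform the same filter on the complete list of $(5,24)$-graphs; since $24$ is even, no parity restriction on $d$ applies, and the candidate regular degrees are those for which Table~\ref{table:counts_5chrom} gives a nonzero count, i.e.\ $d \in \{7, 8, 9, 10, 11, 12\}$. A single linear pass that checks whether the degree sequence of each graph is constant is expected to return exactly $63$ graphs, all $7$-regular. Combined with the previous step this gives the claimed statement. To confirm that these $63$ graphs are genuinely all of them, I would also rerun the filter directly inside the maximum triangle-free generator from Section~\ref{subsect:mtf} (as an independent double-check from the extension method used to assemble the full $(5,24)$ list), and verify that the same $63$ graphs arise up to isomorphism via \verb|nauty|.

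The main obstacle is computational rather than conceptual: the list of $(5,24)$-graphs comprises more than $10^9$ graphs, so iterating through them requires either storing the list compactly or (preferably) fusing the regularity filter into the generation routine so that candidates are discarded on the fly. Once this engineering issue is resolved, the claim follows from a direct enumeration, since the underlying mathematical input -- the completeness of the $(5,n)$-lists for $n \leq 24$ and Brooks' theorem -- is already in place.
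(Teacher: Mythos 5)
Your proposal is correct and matches the paper's (implicit) argument: the claim is a direct consequence of filtering the complete enumerations of $(5,n)$-graphs for $n=22,23,24$ for regularity, with $n(5)=22$ guaranteeing no smaller order need be checked. One small caveat on your proposed cross-check: filtering inside the mtf generator alone would only detect \emph{maximal} triangle-free regular graphs, so the edge-removal step is still needed to certify the full count of 63.
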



We determined that 58 out of the 63 graphs from Claim~\ref{obs:smallest_regular} are 5-vertex-critical, and none of them is 5-critical.  The adjacency list of one of these graphs can be found in the Appendix.
These graphs can also be downloaded from the database of interesting graphs from the \textit{House of Graphs}~\cite{hog} by searching for the keywords ``regular triangle-free 5-chromatic''.
Out of these 63 graphs, 11 have an automorphism group of order 2 and the remaining have a trivial automorphism group. 
 

For 6-chromatic graphs we obtained the following result.

\begin{theorem}
There are no triangle-free 6-chromatic graphs on 31 vertices.
\end{theorem}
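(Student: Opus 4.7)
The plan is to apply the maximum degree extension method of Section~\ref{subsect:maxdeg} in a focused way. I would first reduce to ruling out \emph{mtf} $(6,31)$-graphs. The hypothesis of the reduction described in Section~\ref{subsect:mtf} requires that no triangle-free graph on $31$ vertices has chromatic number larger than $6$, and this follows from $n(7)\geq 38$, which in turn is an immediate consequence of $n(6)\geq 29$: Kostochka's bound forces $\Delta\geq 8$ in any smallest triangle-free $7$-chromatic graph $G$, and removing a maximum-degree vertex together with its (independent) neighbourhood yields a triangle-free graph of chromatic number exactly $6$ (if the chromatic number were at most $5$, then $G$ would be $6$-colourable; if it were $7$, minimality would be violated), so $n(7)\geq n(6)+9\geq 38$.

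Now let $G$ be a hypothetical mtf $(6,31,d)$-graph with $v$ a vertex of maximum degree $d$. By Brooks' theorem $d\geq 6$, and by the proposition of Section~\ref{subsect:maxdeg}, $G\setminus(N(v)\cup\{v\})$ is either a $(5,30-d,\leq d-1)$-graph or a $(6,30-d,\leq d-1)$-graph. The second alternative is vacuous: it would require $30-d\geq n(6)\geq 29$, which contradicts $d\geq 6$. The first alternative forces $30-d\geq n(5)=22$, so $d\in\{6,7,8\}$.

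I would then consult Table~\ref{table:counts_5chrom} to dispose of each remaining value of $d$. For $d=6$ one would need a $(5,24)$-graph with $\Delta\leq 5$, and for $d=7$ a $(5,23)$-graph with $\Delta\leq 6$; the rows for $n=24$ and $n=23$ show that no such graphs exist. The only surviving case is $d=8$, in which the possible reduced graphs are exactly the eight $(5,22,7)$-graphs recorded in Table~\ref{table:counts_5chrom}. The proof is then completed by feeding these eight graphs into Algorithm~\ref{algo:init}, enumerating every way of attaching a new degree-$8$ vertex $v$ whose eight neighbours are glued to maximal independent sets of the input, and verifying that no resulting graph is an mtf $(6,31,8)$-graph.

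The only substantive difficulty is computational, and lies entirely in this last case. Although there are only eight inputs, each $(5,22,7)$-graph can have a very large number of maximal independent sets of sizes up to seven, and one must consider all ordered assignments of eight of them (one per neighbour of $v$), subject to the forbidden-vertex pruning of Algorithm~\ref{algo:construct} and to isomorph rejection via \texttt{nauty}. I expect the enumeration to terminate within a modest amount of CPU time and, assuming the theorem holds, to output the empty set.
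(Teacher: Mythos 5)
Your proposal is correct and follows essentially the same route as the paper: reduce to mtf graphs, use Brooks' theorem and $n(5)=22$ to confine the maximum degree to $\{6,7,8\}$, eliminate $d=6,7$ via the maximum-degree data in Table~\ref{table:counts_5chrom}, and dispose of $d=8$ by running the extension algorithm on the $(5,22,7)$-graphs. Your justification that chromatic number cannot exceed $6$ on $31$ vertices (via Kostochka's bound giving $n(7)\geq 38$) is a more explicit treatment of a point the paper leaves implicit in its final remark, but it does not change the structure of the argument.
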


\begin{proof}
Let $G$ be an mtf $(6,31)$-graph. The following cases for the maximum degree $d$ of $G$ can occur:

\begin{itemize}
\item Case $d=6$: removing a vertex of degree $d$ and its neighbours yields a $(5,24, \leq 5)$-graph, however it follows from our results from Table~\ref{table:counts_5chrom} that a $(5,24)$-graph has $\Delta \geq 7$.
\item Case $d=7$: removing a vertex of degree $d$ and its neighbours yields a $(5,23, \leq 6)$-graph, however it follows from our results from Table~\ref{table:counts_5chrom} that a $(5,23)$-graph has $\Delta \geq 7$.
\item Case $d=8$: removing a vertex of degree $d$ and its neighbours yields a $(5,22, \leq 7)$-graph. Such graphs do exist but applying Algorithms~\ref{algo:init} and~\ref{algo:construct} to these graphs showed that there are no mtf $(6,31,8)$-graphs.
\item Case $d>8$: removing a vertex of degree $d$ and its neighbours yields a $(5,\leq 21)$-graph, however such graphs do not exist since $n(5)=22$.
\end{itemize}

Finally, note that there is an $(k,n)$-graph if and only if there is an mtf $(k,n)$-graph.
\end{proof}

The proof that no triangle-free 6-chromatic graphs on less than 31 vertices exist is completely analogous and together this gives us the lower bound of $32 \leq n(6)$ in Theorem~\ref{thm:bounds_6chrom}. Also note that a similar reasoning (but now using Kostochka's~\cite{kostochka1982modification} bound $d \geq 8$ for $k=7$) gives a lower bound of 41 for $n(7)$ in Corollary~\ref{cor:bounds_7chrom}.

By applying the maximum degree extension algorithm for higher orders, we were also able to show the following.

\begin{claim}\label{obs:larger_orders}
If a $(6,32)$-graph exists, it must have maximum degree 6. If a $(6,33)$-graph exists, it must have maximum degree 6 or 7.
\end{claim}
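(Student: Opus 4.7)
The plan is to extend the case analysis used in the preceding theorem (ruling out $(6,31)$-graphs) to the next two orders. Since a triangle-free $6$-chromatic graph exists on $n$ vertices iff an mtf one does, it suffices to prove that no mtf $(6,32,d)$-graph exists for $d\neq 6$ and no mtf $(6,33,d)$-graph exists for $d\notin\{6,7\}$. In each such case I would take a maximum-degree vertex $v$ in a hypothetical mtf $(6,n,d)$-graph $G$ and consider $G' = G\setminus(N(v)\cup\{v\})$, a graph on $n-d-1$ vertices. Theorem~\ref{thm:bounds_6chrom} says $n(6)\geq 32$, so whenever $n-d-1\leq 31$ the graph $G'$ cannot be $6$-chromatic and must therefore be a $(5,n-d-1,\leq d-1)$-graph. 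Each case is then either eliminated directly from Table~\ref{table:counts_5chrom} (nonexistence of the requisite base graph) or reduced to a finite computation via Algorithms~\ref{algo:init}–\ref{algo:construct}.

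For $n=32$, Brooks' theorem gives $d\geq 6$. The case $d\geq 10$ is excluded because $G'$ would be a $(5,\leq 21)$-graph, contradicting $n(5)=22$. The case $d=7$ demands a $(5,24,\leq 6)$-graph, and Table~\ref{table:counts_5chrom} records zero such graphs. The remaining two cases, $d=8$ and $d=9$, correspond respectively to $(5,23,\leq 7)$- and $(5,22,\leq 8)$-base graphs; both families are nonempty (indeed all $80$ of the $(5,22)$-graphs qualify for $d=9$), so here I would feed the complete lists into Algorithm~\ref{algo:init} and verify computationally that no mtf $(6,32,d)$-graph is produced.

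For $n=33$ the structure is identical. Brooks gives $d\geq 6$; $d\geq 11$ forces a $(5,\leq 21)$-graph and is immediate. The remaining nontrivial cases are $d\in\{8,9,10\}$, with base families $(5,24,\leq 7)$, $(5,23,\leq 8)$, and $(5,22,\leq 9)$, and for each one I would run Algorithms~\ref{algo:init}–\ref{algo:construct} on the appropriate list extracted from Table~\ref{table:counts_5chrom}. Note that the cases $d=6$ for $n=32$, and $d\in\{6,7\}$ for $n=33$, are \emph{not} ruled out because they would require complete sets of $(5,n)$-graphs for $n\in\{25,26\}$, which are not computed here; this is exactly why the statement allows those residual degrees.

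The main obstacle is purely computational. The hardest single case is $n=33$, $d=8$: Table~\ref{table:counts_5chrom} lists $3{,}735{,}593$ eligible $(5,24,7)$-graphs, and for each one Algorithm~\ref{algo:construct} must enumerate assignments of eight new neighbours of $v$ to maximal independent sets. To make this tractable I would rely heavily on the \emph{forbidden}-vertex pruning described in Section~\ref{subsect:maxdeg} to prevent degree blow-up along partial assignments, on the canonicity check via \verb|nauty| to eliminate isomorphic partial configurations, and on parallelising the outer loop over base graphs across a cluster (as was already done for the generation of the base lists themselves). A useful sanity check, run in parallel, is to verify that for every smaller $n\leq 31$ the same pipeline reproduces the nonexistence result of the preceding theorem.
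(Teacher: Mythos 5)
Your case analysis matches the paper's (implicit) argument exactly: the paper justifies this claim only by ``applying the maximum degree extension algorithm for higher orders,'' and your enumeration of which degrees are eliminated directly by Table~\ref{table:counts_5chrom}, which require running Algorithms~\ref{algo:init}--\ref{algo:construct}, and which remain open (because complete sets of $(5,25)$- and $(5,26)$-graphs are unavailable) is precisely that argument spelled out. One small slip: for $n=32$, $d=9$ the base family is the $(5,22,\leq 8)$-graphs, of which there are $78$ rather than all $80$ (two $(5,22)$-graphs have maximum degree $9$); this does not affect the argument, since the family is nonempty either way and the case still reduces to the stated computation.
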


Removing a vertex of degree 6 and its neighbourhood from an mtf $(6,32,6)$-graph yields a $(5,25,5)$-graph. So to improve the lower bound of $n(6)$ to 33, it would be sufficient to show that $(5,25,5)$-graphs do not exist. However, it is computationally infeasible to do this using our current methods.

We think it is highly unlikely that the graphs from Claim~\ref{obs:larger_orders} or $(5,25,5)$-graphs exist, since otherwise they would be a counterexample to Reed's $\omega$, $\Delta$, and $\chi$ conjecture~\cite{reed1998omega}. This conjecture says the following.

\begin{conjecture}[Reed~\cite{reed1998omega}]
For any graph $G$, $\chi(G) \leq \left\lceil \frac {\Delta(G)+1+\omega(G)}2 \right\rceil$.
\end{conjecture}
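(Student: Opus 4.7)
The plan is to attempt a strong induction on $|V(G)|$, though I must note upfront that Reed's conjecture remains a well-known open problem in its full generality, so the strategy below is necessarily speculative and aims only to outline a natural line of attack. The base case is clean: if $G$ has no edges then $\chi(G)=1$, and if $\Delta(G) \leq 2$ then $G$ is a disjoint union of paths and cycles, so Brooks' theorem gives $\chi(G) \leq \Delta(G) \leq \left\lceil \frac{\Delta(G)+1+\omega(G)}{2} \right\rceil$ using $\omega(G) \geq 2$ whenever an edge is present.

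For the inductive step, I would pick a vertex $v$ of maximum degree and color $G \setminus \{v\}$ with $c := \left\lceil \frac{\Delta(G)+1+\omega(G)}{2} \right\rceil$ colors using the hypothesis, noting $\omega(G\setminus\{v\}) \leq \omega(G)$ and $\Delta(G\setminus\{v\}) \leq \Delta(G)$. The coloring extends to $v$ unless every one of the $c$ colors appears on $N(v)$, which forces $\Delta(G) \geq c$, equivalently $\Delta(G) \geq \omega(G)+1$. In this hard regime the plan would be to carry out a probabilistic recoloring in the spirit of Reed's original partial result $\chi(G) \leq \lceil \alpha \Delta + (1-\alpha)(\omega+1)\rceil$: partition vertices into those whose neighborhoods contain a near-maximum clique (``dense'' vertices) and the rest (``sparse'' vertices, whose neighborhoods admit large independent sets that can be merged into a single color class), then apply the Lovász Local Lemma to combine local recolorings around each $v$ without creating global conflicts. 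For the triangle-free specialization relevant to this paper, $\omega(G)=2$ and the target becomes $\chi(G) \leq \left\lceil \frac{\Delta(G)+3}{2}\right\rceil$; here one could try to exploit that $N(v)$ is itself an independent set of size $\Delta(G)$ so that in principle $\lfloor (\Delta-1)/2 \rfloor$ of its vertices could share colors, reducing the effective degree.

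The hard part is precisely the reason the conjecture has resisted attack for over two decades: pushing the constant $\alpha$ in Reed's argument from the tiny value obtained probabilistically up to $\tfrac{1}{2}$ appears to require fundamentally new control on the correlations between local recolorings, and every known tightening of the bound has stopped well short of the conjectured coefficient. In the narrow regime relevant here — triangle-free graphs with small $\Delta$ and $\chi$ near the conjectured ceiling — the computational evidence accumulated in Tables~\ref{table:counts_5chrom} and~\ref{table:counts_4chrom} is highly suggestive but non-constructive. A conceptual resolution in this regime would presumably have to bootstrap from a structure theorem characterizing the would-be extremal triangle-free graphs (for instance, ruling out $(5,25,5)$-graphs as flagged after Claim~\ref{obs:larger_orders}), and producing such a structure theorem — rather than the induction skeleton itself — is the step I expect to be the real bottleneck.
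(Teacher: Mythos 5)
The statement you were asked about is Reed's conjecture, and the paper does not prove it --- it is stated as a \emph{conjecture} and remains a well-known open problem. What the paper actually establishes is only a finite verification: using Brooks' theorem for $\chi(G)\leq 4$, the enumeration data showing that all $(5,22)$-, $(5,23)$- and $(5,24)$-graphs have $\Delta\geq 7$ (Table~\ref{table:counts_5chrom}), and the bound $32\leq n(6)$, the author concludes that the conjecture holds for triangle-free graphs up to 24 vertices (and up to 31 vertices when $\chi\geq 6$). You were right to flag your plan as speculative, but to be clear about where it fails as a proof: the induction collapses exactly in the interesting regime. When every one of the $c=\bigl\lceil \frac{\Delta+1+\omega}{2}\bigr\rceil$ colors appears on $N(v)$, you fall back on a sketch of Reed's probabilistic dense/sparse decomposition, but that machinery is only known to yield $\chi\leq\lceil \alpha\Delta+(1-\alpha)(\omega+1)\rceil$ for a very small constant $\alpha$, and no known refinement pushes $\alpha$ to the conjectured $\tfrac12$; invoking it does not close the gap, it merely restates the open problem. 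Likewise, in the triangle-free specialization, the observation that $N(v)$ is independent does not reduce the chromatic number: assigning few colors within $N(v)$ says nothing about conflicts through second neighborhoods, which is where the difficulty lives, and Local Lemma arguments control such conflicts only at the cost of the constant you need to improve.

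Two smaller inaccuracies: Brooks' theorem gives $\chi\leq\Delta$ only for connected graphs that are neither complete nor odd cycles, so your base case claim ``$\chi(G)\leq\Delta(G)$ when $\Delta\leq 2$'' is false for odd cycles (there $\chi=3=\Delta+1$, although the conjectured bound $\lceil(2+1+2)/2\rceil=3$ still holds, so the conclusion survives); and the condition forcing all $c$ colors onto $N(v)$ gives $\Delta(G)\geq c$, which is not equivalent to $\Delta(G)\geq\omega(G)+1$ (from $\Delta\geq\lceil\frac{\Delta+1+\omega}{2}\rceil$ one gets $\Delta\geq\omega+1$, but the converse fails). In short, the only ``proof'' available in the paper's scope is the computational case check for small triangle-free graphs, and your proposal, while a reasonable survey of the known line of attack, contains no mechanism that could complete the general statement.
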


Here $\omega(G)$ denotes the clique number and note that for triangle-free graphs $G$ we have $\omega(G) = 2$. 
Previously, Reed's conjecture was known to hold for triangle-free graphs up to 21 vertices as a consequence of Jensen en Royle's~\cite{jensen1995small} results.

\begin{claim}
Reed's conjecture holds for triangle-free graphs up to at least 24 vertices and for triangle-free graphs $G$ with $\chi(G) \geq 6$ up to at least 31 vertices.
\end{claim}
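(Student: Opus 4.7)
The plan is to split by chromatic number and reduce everything to either Brooks' theorem, the enumeration in Table~\ref{table:counts_5chrom}, or the lower bound $n(6) \geq 32$ from Theorem~\ref{thm:bounds_6chrom}. Note first that for any triangle-free graph with at least one edge we have $\omega(G)=2$, so Reed's inequality becomes $\chi(G) \leq \lceil (\Delta(G)+3)/2 \rceil$; edgeless and bipartite graphs satisfy Reed trivially. From now on fix a triangle-free $G$ with $n := |V(G)|$ and assume $G$ has an edge.

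For $\chi(G)=3$, I would note that a triangle-free 3-chromatic graph contains an odd cycle and so has $\Delta \geq 2$, which already yields $\lceil(\Delta+3)/2\rceil \geq 3$. For $\chi(G)=4$, Brooks' theorem applies: the graph is not a complete graph (it is triangle-free) nor an odd cycle, so $\Delta \geq \chi = 4$, which gives $\lceil(\Delta+3)/2\rceil \geq 4$. These two cases handle all $\chi \leq 4$ independently of $n$.

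The only nontrivial case for the first assertion is $\chi(G)=5$, where Reed demands $\Delta \geq 6$ (since $\lceil(5+3)/2\rceil=4$ but $\lceil(6+3)/2\rceil=5$). Since $n(5)=22$, there is nothing to check for $n \leq 21$. For $n \in \{22,23,24\}$ one reads directly from Table~\ref{table:counts_5chrom} that the column ``$<7$'' is zero, i.e.\ every $(5,n)$-graph with $n \leq 24$ already satisfies $\Delta \geq 7 > 6$. Hence Reed's inequality holds, completing the verification for all triangle-free graphs on at most 24 vertices.

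For the second assertion, one simply invokes Theorem~\ref{thm:bounds_6chrom}: since $n(6) \geq 32$, no triangle-free graph with $\chi \geq 6$ exists on at most 31 vertices, so the conjecture is vacuously true in that range. The only step requiring any real work is the $\chi=5$ case, and even there the work has already been done in Section~\ref{subsect:results}; thus the main ``obstacle'' is really just confirming that the enumeration in Table~\ref{table:counts_5chrom} is exhaustive, which is guaranteed by the mtf generator from~\cite{triangleramsey-site,BGSP12} combined with the edge-removal procedure described in Section~\ref{subsect:mtf}.
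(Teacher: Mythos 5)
Your proposal is correct and follows essentially the same route as the paper: Brooks' theorem for $\chi \leq 4$, the maximum-degree column of Table~\ref{table:counts_5chrom} (showing $\Delta \geq 7$ for all $(5,n)$-graphs with $n \leq 24$) for $\chi = 5$, and the bound $32 \leq n(6)$ to make the $\chi \geq 6$ case vacuous. Your version merely spells out a few details the paper leaves implicit, such as the explicit threshold $\Delta \geq 6$ required by Reed's inequality when $\chi = 5$.
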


\begin{proof}
It follows from Brooks' Theorem that Reed's conjecture holds for triangle-free graphs $G$ with $\chi(G) \leq 4$. 
It follows from our results from Table~\ref{table:counts_5chrom} that $(5,22)$-, $(5,23)$- and $(5,24)$-graphs have $\Delta \geq 7$ and from the lower bound for $n(6)$ that Reed's conjecture holds for triangle-free graphs up to at least 24 vertices.
The second statement follows immediately from our bound $32 \leq n(6)$.
\end{proof}


\subsubsection*{$k$-chromatic graphs of higher girth}

Recall that Royle~\cite{royle2015} showed that the smallest 4-chromatic graphs of girth at least 5 have 21 vertices and that there are exactly 18 such graphs.
Using the generator \verb|geng|~\cite{nauty-website, mckay_14} we determined that there are exactly 1588 4-chromatic graphs of girth at least 5 on 22 vertices. Exactly 625 of those graphs are 4-vertex-critical and 319 are 4-critical. Figure~\ref{fig:22v_maxceg7} shows the smallest 4-critical graph of girth 5 and maximum degree 7. It has 22 vertices and an automorphism group of order 7. (Note that the 18 4-chromatic graphs of girth at least 5 on 21 vertices all have maximum degree less than 7).

\begin{figure}[h!t]
	\centering
	\includegraphics[width=0.6\textwidth]{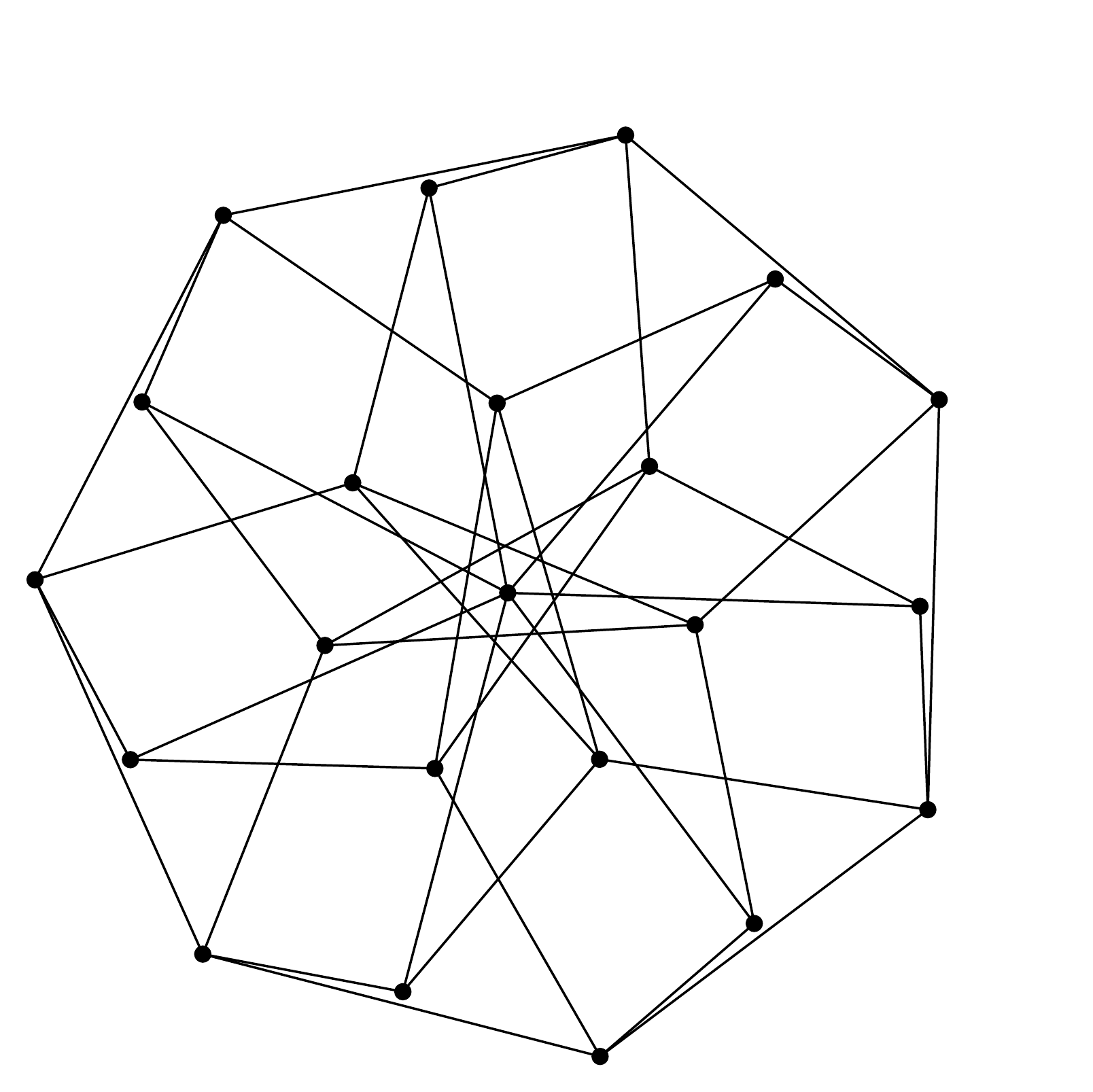}
	\caption{The smallest 4-critical graph of girth 5 and maximum degree 7.}
	\label{fig:22v_maxceg7}
\end{figure}

By adapting \verb|geng| to generate graphs with girth at least 6, we were able to show the following.

\begin{claim}
The smallest 4-chromatic graph of girth at least 6 has at least 25 vertices.
\end{claim}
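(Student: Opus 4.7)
I would prove the claim by exhaustive computer enumeration, following the blueprint used earlier in the paper for $4$-chromatic graphs of girth at least $5$ on $22$ vertices. First I would adapt \verb|geng| (from \verb|nauty|) by augmenting its canonical-construction pruning so that any partial graph containing a $3$-, $4$-, or $5$-cycle is discarded; this yields, for each $n$, a generator of all non-isomorphic graphs of girth at least $6$ on $n$ vertices. Each output would then be passed through a chromatic-number routine, and only the $4$-chromatic graphs retained. The claim reduces to verifying that this filtered list is empty for every $n \leq 24$.

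Two reductions shrink the search substantially. Because girth $\geq 6$ implies girth $\geq 5$, Royle's result that the smallest $4$-chromatic graph of girth $\geq 5$ has $21$ vertices already handles every $n \leq 20$, so only $n \in \{21,22,23,24\}$ need to be enumerated. Moreover, in a graph of girth $\geq 6$ the induced subgraph on the ball of radius $2$ about any vertex is a tree: triangle-freeness gives no edges inside $N(v)$; a common neighbour of two vertices of $N(v)$ outside $v$ would yield a $4$-cycle; and an edge between two second neighbours would yield a $4$- or $5$-cycle. Hence
\[
1 + \sum_{u \in N(v)} \deg(u) \;\leq\; |V(G)|.
\]
We may assume $G$ is $4$-vertex-critical (otherwise pass to a $4$-vertex-critical subgraph, preserving both the girth condition and the upper bound on $|V(G)|$), so $\delta(G) \geq 3$. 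Applying the inequality at a vertex of maximum degree and using Brooks' Theorem (which forces $\Delta(G) \geq 4$) together with $\delta(G) \geq 3$ yields $1 + 3\Delta(G) \leq 24$, so $\Delta(G) \in \{4,5,6,7\}$. The enumeration can therefore be restricted to graphs with maximum degree in this narrow range.

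The main obstacle will be the sheer number of girth-$\geq 6$ graphs on up to $24$ vertices, even after the above restrictions. If direct enumeration via adapted \verb|geng| turns out to be prohibitive, a natural fallback is to mimic the maximum-degree extension method of Section~\ref{subsect:maxdeg} for girth $\geq 6$: for each candidate $d \in \{4,5,6,7\}$, take $v$ to be a vertex of maximum degree and consider $G' = G \setminus (N(v) \cup \{v\})$, which is $3$-chromatic, has girth $\geq 6$, has at most $24 - d - 1$ vertices, and has $\Delta(G') \leq d$. One then iterates over all such $G'$ (generated by the same adapted \verb|geng| on a far smaller vertex set) together with every attachment of $v$ to $d$ vertices of $G'$ that lie pairwise at distance $\geq 4$ in $G'$ — the distance-$\geq 4$ condition is precisely what prevents a cycle of length $\leq 5$ through $v$. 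Checking that no such extension produces a $4$-chromatic graph then establishes the claim.
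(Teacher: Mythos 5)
Your proposal matches the paper's approach: the claim is established there precisely by adapting \verb|geng| to enumerate all graphs of girth at least $6$ up to the relevant order and checking that none is $4$-chromatic. The additional pruning you supply (restricting to $n\in\{21,\dots,24\}$ via Royle's girth-$5$ result, passing to a $4$-vertex-critical subgraph, and the radius-$2$ tree bound giving $\Delta\leq 7$) is correct and only strengthens the same computation.
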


For 5-chromatic graphs of girth at least 5 we can show the following.

\begin{theorem}
The smallest 5-chromatic graph of girth at least 5 has at least 29 vertices.
\end{theorem}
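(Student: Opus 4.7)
The plan is to follow the same strong-induction-on-$n$ template used to prove $n(6) \geq 32$, but with the girth-$5$ variant of Algorithm~\ref{algo:construct} introduced earlier in this section. Assume towards contradiction that a $5$-chromatic graph of girth at least $5$ on at most $28$ vertices exists, and take a smallest such $G$. Removing edges while $5$-chromaticity persists leaves both the girth and the vertex count unchanged, so we may assume $G$ is $5$-critical, which forces $\delta(G) \geq 4$, while Brooks' theorem forces $\Delta(G) = d \geq 5$. Fix a vertex $v$ of degree $d$ and set $G' = G \setminus (N(v) \cup \{v\})$. Because $G$ has girth at least $5$, for distinct $n_i, n_j \in N(v)$ the sets $N_{G'}(n_i)$ and $N_{G'}(n_j)$ are disjoint (a common vertex would yield a $C_4$ through $v$) and each has size at least $\delta(G)-1 \geq 3$. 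Hence $|V(G')| = n-d-1 \geq 3d$, forcing $d \leq (n-1)/4 \leq 6$, so only $d \in \{5, 6\}$ needs to be considered when $n = 28$, and even fewer values for smaller $n$.

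Next I analyse $\chi(G')$. Since $N(v)$ is independent in the triangle-free graph $G$, any proper $3$-coloring of $G'$ extends to a proper $4$-coloring of $G$ by assigning a new colour to all of $N(v)$ and reusing a colour already used on $G'$ for $v$, contradicting $\chi(G) = 5$; thus $\chi(G') \geq 4$. The inductive hypothesis, applied at the smaller value $n-d-1 < n$, rules out $\chi(G') = 5$, so $G'$ is a $4$-chromatic graph of girth at least $5$ on $n-d-1$ vertices with $\Delta(G') \leq d$. For each admissible pair $(n, d)$, the task therefore reduces to enumerating all such $G'$ and, for each one, running the modified Algorithm~\ref{algo:construct} that glues the $d$ neighbours $n_1, \ldots, n_d$ of $v$ to pairwise disjoint independent sets of $G'$ whose vertices lie at pairwise distance at least $3$ in $G'$, then testing whether any resulting graph is $5$-chromatic of girth at least $5$. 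For orders at most $20$ the input families can be produced by restricting the generator \texttt{geng} to girth at least $5$ and filtering by $\chi = 4$; for orders $21$ and $22$ the enumerations due to Royle (the $18$ graphs) and in this section (the $1588$ graphs) are reused.

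The main obstacle is computational rather than conceptual. The dominant case is $(n, d) = (28, 5)$, which requires running the gluing search over every $4$-chromatic girth-$\geq 5$ graph on $22$ vertices with $\Delta \leq 5$, and for each such graph the number of candidate ordered $5$-tuples of pairwise disjoint independent sets whose elements lie pairwise at distance at least $3$ can be substantial. However, both the disjointness requirement and the distance-$3$ condition prune the search aggressively, and together with the \emph{forbidden}-vertex mechanism already present in Algorithm~\ref{algo:construct} they should suffice to make the total computation finish in a practical amount of CPU time. If no extension over any admissible $(n, d)$ yields a $5$-chromatic graph of girth at least $5$, the inductive step closes and the bound $n \geq 29$ follows.
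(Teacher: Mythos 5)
Your proposal follows essentially the same route as the paper: delete a maximum-degree vertex together with its neighbourhood, argue that the remainder is a $4$-chromatic graph of girth at least $5$ on $n-d-1$ vertices, dispose of all but the cases $(n,d)\in\{(27,5),(28,5),(28,6)\}$ using the fact that no such graph exists on fewer than $21$ vertices, and settle those three cases by the girth-$5$ gluing search. Your extra step of passing to a $5$-critical subgraph to bound $d$ by a degree count is harmless but unnecessary, since the paper excludes $d>6$ directly from $n-d-1\leq 20$; otherwise the reduction and the computations you prescribe coincide with the paper's proof.
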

\begin{proof}
Let $G$ be a 5-chromatic graph of girth at least 5 of order $n \leq 28$. The following cases for $n$ and the maximum degree $d$ of $G$ can occur:

\begin{itemize}
\item Case $n \leq 26$: removing a vertex of maximum degree yields a $(4,\leq 26 - 5 - 1 = 20)$-graph of girth at least 5, however such graphs do not exist since the smallest 4-chromatic graphs of girth at least 5 have 21 vertices.

\item Case $n = 27$: 
\begin{itemize}
\item Case $d=5$:  removing a vertex of degree $d$ and its neighbours yields a $(4,21, \leq 5)$-graph of girth at least 5. Such graphs do exist but applying the modified version of Algorithm~\ref{algo:construct} for girth 5 to these graphs showed that there are no $(5,27,5)$-graphs of girth at least 5.

\item Case $d>5$:  removing a vertex of degree $d$ and its neighbours yields a $(4,\leq 20)$-graph of girth at least 5, but such graphs do not exist.
\end{itemize}

\item Case $n = 28$: 
\begin{itemize}
\item Case $d=5$:  removing a vertex of degree $d$ and its neighbours yields a $(4,22, \leq 5)$-graph of girth at least 5. 
Such graphs do exist but applying the modified version of Algorithm~\ref{algo:construct} for girth 5 to these graphs showed that there are no $(5,28,5)$-graphs of girth at least 5. 

\item Case $d=6$:  removing a vertex of degree $d$ and its neighbours yields a $(4,21, \leq 6)$-graph of girth at least 5. Such graphs do exist but applying the modified version of Algorithm~\ref{algo:construct} for girth 5 to these graphs showed that there are no $(5,28,6)$-graphs of girth at least 5.

\item Case $d>6$:  removing a vertex of degree $d$ and its neighbours yields a $(4,\leq 20)$-graph of girth at least 5, but such graphs do not exist.
\end{itemize}
\end{itemize}
\end{proof}

By applying the maximum degree extension algorithm for order 29, we were also able to show the following.

\begin{claim}
If a 5-chromatic graph of girth at least 5 on 29 vertices exists, it must have maximum degree 5. 
\end{claim}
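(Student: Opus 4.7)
The plan is to mimic the proof of the preceding theorem: assume a 5-chromatic graph $G$ of girth at least 5 on $n=29$ vertices with maximum degree $d$, use Brooks' theorem to force $d \geq 5$, and then eliminate every value $d \geq 6$ by a combination of an order-counting argument and a computer search. As in Section~\ref{subsect:maxdeg}, when $v$ is a vertex of maximum degree, $G' = G \setminus (N(v) \cup \{v\})$ has $28-d$ vertices, girth at least 5, and maximum degree at most $d-1$; the girth-5 analog of the proposition in Section~\ref{subsect:maxdeg} (after, if necessary, extending $G$ to a maximally girth-$5$ supergraph in the standard way) implies that $G'$ is 4-chromatic — it cannot be 5-chromatic by the theorem just proved, since $28-d \leq 23 < 29$.

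Next I would split into three cases. For $d \geq 8$, $G'$ has at most 20 vertices but must be 4-chromatic of girth at least 5, contradicting Royle's theorem. For $d = 7$, $G'$ is a $(4,21,\leq 6)$-graph of girth at least 5, hence one of Royle's 18 graphs (all of which have maximum degree at most $6$, as noted just before Figure~\ref{fig:22v_maxceg7}); I would feed each of these 18 graphs into the girth-5 variant of Algorithm~\ref{algo:construct}, trying all ways of attaching a new vertex $v$ of degree 7 to pairwise disjoint independent sets of $G'$ whose vertices are at mutual distance at least 3, and verify that no $(5,29,7)$-graph of girth at least 5 is produced. For $d = 6$, $G'$ is a $(4,22,\leq 5)$-graph of girth at least 5; from the 1588 4-chromatic girth-$\geq 5$ graphs on 22 vertices enumerated earlier in Section~\ref{subsect:results}, I would restrict to those with maximum degree at most 5 and again apply the girth-5 variant of Algorithm~\ref{algo:construct} with a new vertex $v$ of degree 6, verifying that no $(5,29,6)$-graph of girth at least 5 arises. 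Combined with Brooks' lower bound $d \geq 5$, this leaves only $d = 5$.

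The main obstacle is the $d=6$ case: although the girth-5 constraint tightens the search (the 6 neighbours of $v$ must go to pairwise disjoint independent sets of $G'$ with all pairs of chosen vertices at distance at least 3 in $G'$), the number of candidate input graphs on 22 vertices is large, and for each one many assignments must be enumerated. Making this case tractable will rely on the same pruning ideas already used in Algorithm~\ref{algo:construct} (\emph{forbidden} vertices, canonical-form isomorphism rejection via \texttt{nauty}), together with the additional girth-5 distance condition, which drastically reduces the number of admissible independent sets per input graph. The $d=7$ case, by contrast, starts from only 18 input graphs and should be comparatively easy, giving a useful check on the implementation.
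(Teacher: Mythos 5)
Your overall strategy (Brooks' theorem to force $d\geq 5$, order counting to kill $d\geq 8$, and the girth-5 variant of Algorithm~\ref{algo:construct} to kill $d=6,7$) is exactly what the paper does. However, there is a genuine gap in your $d=6$ case, coming from an incorrect degree bound on $G'=G\setminus(N(v)\cup\{v\})$. The bound $\Delta(G')\leq d-1$ in the proposition of Section~\ref{subsect:maxdeg} is specific to \emph{mtf} graphs: there it follows from diameter~2, since every vertex outside $N(v)\cup\{v\}$ must have a neighbour inside $N(v)$ and hence loses a neighbour upon deletion. For graphs of girth at least~5 this fails --- a graph that is edge-maximal with respect to girth $\geq 5$ only has diameter at most~3, so a vertex at distance~3 from $v$ can keep all of its neighbours --- and moreover passing to a maximal girth-$5$ supergraph does not help you, because adding edges can raise the maximum degree and thereby change which case of your analysis applies to the graph you actually care about. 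This is why Section~\ref{subsect:maxdeg} explicitly states that the girth-5 adaptation starts from $(k-1,n-d-1,\leq d)$-graphs rather than $(k-1,n-d-1,\leq d-1)$-graphs, and why the paper's proof of the preceding theorem uses, e.g., $(4,21,\leq 6)$-graphs in its $n=28$, $d=6$ case.

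Concretely: for $d=6$ you restrict the input to the $(4,22,\leq 5)$-graphs of girth at least~5, but the correct input set is the $(4,22,\leq 6)$-graphs of girth at least~5. Such graphs with a vertex of degree exactly~6 do exist among the 1588 graphs on 22 vertices (the paper even exhibits one of maximum degree~7 in Figure~\ref{fig:22v_maxceg7}), so your search space is strictly too small and could miss a $(5,29,6)$-graph of girth at least~5. Your $d=7$ case survives only by luck: there $G'$ has 21 vertices, and all 18 of Royle's graphs happen to have maximum degree at most~6, so the distinction between $\leq 6$ and $\leq 7$ is vacuous. Fix the $d=6$ case by feeding in all $(4,22,\leq 6)$-graphs of girth at least~5 and the argument goes through as in the paper.
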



\subsubsection*{Correctness tests}

The mtf and maximum degree extension method are complementary. For example: it is computationally infeasible to determine all $(5,24,\leq 7)$-graphs using the latter method. Furthermore we also used the mtf method for correctness tests of our implementations.
More precisely, we used both methods to independently determine the complete sets of all mtf $(4,14)$-, $(4,15)$-, $(4,16)$-  and $(5,22)$-graphs. These sets contain 151, 1019, 8357 and 15 mtf graphs, respectively, and in each case the results of both programs were in complete agreement. 

We also used an independent program to verify that the graphs generated by our programs are indeed $k$-chromatic. Furthermore several of the routines (e.g.\ to compute independent sets) were already used an tested before in our work to generate triangle Ramsey graphs from~\cite{staszek13}. 
Finally, Jensen and Royle listed counts of 4-vertex-critical and 4-critical triangle-free graphs for small orders and small maximum degree in~\cite{jensen1995small} and these counts are in complete agreement with our results from Table~\ref{table:counts_4chrom_crit}.


\section{Improving the upper bound}
\label{section:upper_bound}

In this section we describe how we improved the upper bound for $n(6)$. Recall that the smallest triangle-free 6-chromatic graph known so far was the graph on 44 vertices found by Droogendijk~\cite{droogendijk2015}. 

In Algorithm~\ref{algo:upper_bound} we present the pseudocode of the heuristic method we used to search for smaller triangle-free 6-chromatic graphs. Basically, we start from a set of mtf $(k,n)$-graphs and then recursively remove and add edges in all possible ways until no new $(k,n)$-graphs are found or until non-$k$-vertex-critical graphs are found.  We then remove non-critical vertices from the non-vertex-critical graphs to obtain $k$-chromatic graphs with $n-1$ vertices and repeat Algorithm~\ref{algo:upper_bound} on these $(k,n-1)$-graphs. 
Note that if a graph $G$ is not vertex-critical, the mtf graphs obtained by adding edges to $G$ will be non-vertex-critical as well.

\begingroup
\renewcommand{\baselinestretch}{1.05} 
\begin{algorithm}[h]
\caption{Algorithm to search for non-vertex-critical $(k,n)$-graphs}
\label{algo:upper_bound}
  \begin{algorithmic}[1]
  \STATE given a set of mtf $(k,n)$-graphs $\mathcal{M}$
  \STATE let $\mathcal{N}$ be an empty set \ // will contain a set of non-vertex-critical graphs
  \REPEAT
  			\FOR{every $k$-critical subgraph of the graphs in $\mathcal{M}$}
  				\STATE add edges to $G'$ in all possible ways (without creating any triangles)
  				\FOR{every mtf graph $G'_{mtf}$ obtained from $G'$}
  					\STATE add $G'_{mtf}$ to $\mathcal{M}$
  					\IF{$G'_{mtf}$ is not $k$-vertex-critical}
  						\STATE add $G'_{mtf}$ to $\mathcal{N}$
  					\ENDIF
  				\ENDFOR
  			\ENDFOR
  \UNTIL{no new graphs were added to $\mathcal{M}$ or until $\mathcal{N}$ contains ``enough" graphs}
  \end{algorithmic}
\end{algorithm}
\endgroup

For the first iteration, we started Algorithm~\ref{algo:upper_bound} from the $(6,44)$-graph found by Droogendijk as well as from the $(6,45)$-graphs obtained by applying the Mycielski construction to the 80 $(5,22)$-graphs. By recursively applying this algorithm, we obtained the following result.

\begin{claim} \label{obs:upper_bound}
The smallest triangle-free 6-chromatic graphs have at most 40 vertices. There are at least 750\,000 $(6,40)$-graphs.
\end{claim}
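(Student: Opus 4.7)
The proof is necessarily computational, so my plan is to describe the search strategy and the correctness checks that back up the two quantitative statements. The starting point is a seed set $\mathcal{M}_0$ of mtf $(6,n)$-graphs for a sufficiently large $n$: concretely, the $(6,44)$-graph of Droogendijk together with the $80$ $(6,45)$-graphs obtained by applying the Mycielski construction to the $80$ known $(5,22)$-graphs. From these I would launch Algorithm~\ref{algo:upper_bound}, which alternately deletes edges (down to a $6$-critical subgraph) and re-adds edges in all triangle-free ways until a maximal triangle-free completion $G'_{mtf}$ is produced; every such completion that fails to be $6$-vertex-critical is stored in $\mathcal{N}$, and a non-critical vertex is then excised to yield an mtf $(6,n-1)$-graph which seeds the next descent.

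The plan is to iterate this descent $n = 45 \to 44 \to 43 \to \cdots$, at each step using the $(6,n-1)$-graphs produced by vertex-deletion as the new input $\mathcal{M}$ to Algorithm~\ref{algo:upper_bound}. Running the descent far enough should eventually reach $n = 40$, at which point the upper bound $n(6) \leq 40$ is established by exhibiting any one such graph; an explicit example (and its adjacency list) can be deposited in the supplementary material or in the House of Graphs, as was done earlier in the paper for the regular $(5,24)$-graphs. For the lower-count statement, I would simply not terminate the edge-swap loop at $n=40$ as soon as a single graph is found: instead I would continue running Algorithm~\ref{algo:upper_bound} in the ``or until $\mathcal{N}$ contains enough graphs'' mode, accumulating mtf $(6,40)$-graphs in $\mathcal{M}$ and reduced forms in $\mathcal{N}$, canonicalising each with \verb|nauty| to avoid double-counting, until the tally passes $750{,}000$.

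The two correctness obligations that matter are (i) each output graph is genuinely triangle-free and $6$-chromatic, and (ii) the outputs are pairwise non-isomorphic. For (i) I would re-verify triangle-freeness trivially and recompute $\chi$ with an independent colouring routine, exactly as described in the ``Correctness tests'' paragraph earlier in the paper. For (ii) the \verb|nauty| canonical form filters isomorphs, and the tally of distinct canonical strings is the number reported.

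The main obstacle is not mathematical but algorithmic: Algorithm~\ref{algo:upper_bound} is a heuristic, and there is no a priori guarantee that its edge-swap dynamics ever visit an mtf $(6,40)$-graph when seeded from $n=44$ or $n=45$. Controlling this will require running many parallel descents from a diverse seed set (e.g.\ multiple intermediate $(6,n)$-graphs discovered along the way), tuning how aggressively critical subgraphs are sampled, and accepting that the result produces only an upper bound on $n(6)$ and a lower bound on the number of $(6,40)$-graphs, never an exact count. Once the search has produced a concrete $(6,40)$-graph and a list of at least $750{,}000$ canonical forms, both statements of Claim~\ref{obs:upper_bound} follow.
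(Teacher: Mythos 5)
Your proposal matches the paper's approach essentially exactly: the same seed set (Droogendijk's $(6,44)$-graph plus the $(6,45)$-graphs obtained by applying the Mycielski construction to the 80 $(5,22)$-graphs), the same recursive descent via Algorithm~\ref{algo:upper_bound} with excision of non-critical vertices, and the same verification safeguards (independent chromatic-number checks and \texttt{nauty} canonical forms) described in the paper's correctness-tests paragraph. Your added caveat that the heuristic yields only an upper bound on $n(6)$ and a lower bound on the count of $(6,40)$-graphs is exactly how the paper frames the result.
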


The maximum degree of those more than 750\,000 $(6,40)$-graphs from Claim~\ref{obs:upper_bound} ranges between 11 and 14. All of these graphs have an automorphism group of order 1 or 2 and about 5\,000 of these graphs are mtf.


The adjacency list of one of these $(6,40)$-graphs can be found in the Appendix and more can be downloaded from the database of interesting graphs from the \textit{House of Graphs}~\cite{hog} by searching for the keyword ``(6,40)-graph''.

We also applied our maximum degree extension algorithm from Section~\ref{subsect:maxdeg} on samples of our sets of 5-chromatic graphs (it was computationally infeasible to do this on the complete sets), but this did not yield any $(6,<40)$-graphs. To conclude, we believe that the actual value of $n(6)$ is much closer to 40 than to the current lower bound of 32.


\subsection*{Acknowledgements}

We would like to thank Staszek Radziszowski and Vera Weil for useful suggestions.
Most of the computations were carried out using the Stevin Supercomputer Infrastructure at Ghent University.


\bibliographystyle{plain}
\bibliography{references}

\section*{Appendix}


\subsection*{Adjacency list of a smallest regular triangle-free 5-chromatic graph}

Below is the adjacency list one of the 7-regular triangle-free 5-chromatic graphs from Claim~\ref{obs:smallest_regular}. 



\begin{center}
\renewcommand{\arraystretch}{0.9}
\small
\begin{tabular}{r r r r r r r r}
0: & 7 & 11 & 12 & 15 & 17 & 19 & 22\\
1: & 2 & 4 & 7 & 9 & 14 & 19 & 21\\
2: & 1 & 3 & 10 & 11 & 13 & 20 & 22\\
3: & 2 & 5 & 7 & 9 & 14 & 19 & 23\\
4: & 1 & 5 & 8 & 16 & 17 & 18 & 23\\
5: & 3 & 4 & 6 & 10 & 15 & 20 & 21\\
6: & 5 & 8 & 9 & 11 & 13 & 17 & 19\\
7: & 0 & 1 & 3 & 8 & 13 & 16 & 20\\
8: & 4 & 6 & 7 & 10 & 14 & 15 & 21\\
9: & 1 & 3 & 6 & 12 & 15 & 18 & 22\\
10: & 2 & 5 & 8 & 12 & 17 & 18 & 23\\
11: & 0 & 2 & 6 & 14 & 16 & 18 & 23\\
12: & 0 & 9 & 10 & 13 & 16 & 20 & 21\\
13: & 2 & 6 & 7 & 12 & 14 & 15 & 18\\
14: & 1 & 3 & 8 & 11 & 13 & 17 & 22\\
15: & 0 & 5 & 8 & 9 & 13 & 16 & 23\\
16: & 4 & 7 & 11 & 12 & 15 & 19 & 22\\
17: & 0 & 4 & 6 & 10 & 14 & 20 & 21\\
18: & 4 & 9 & 10 & 11 & 13 & 19 & 21\\
19: & 0 & 1 & 3 & 6 & 16 & 18 & 20\\
20: & 2 & 5 & 7 & 12 & 17 & 19 & 23\\
21: & 1 & 5 & 8 & 12 & 17 & 18 & 22\\
22: & 0 & 2 & 9 & 14 & 16 & 21 & 23\\
23: & 3 & 4 & 10 & 11 & 15 & 20 & 22\\
\end{tabular}
\end{center}

\newpage

\subsection*{Adjacency list of a triangle-free 6-chromatic graph on 40 vertices}

Below is the adjacency list of of one of the triangle-free 6-chromatic graphs on 40 vertices from Claim~\ref{obs:upper_bound}.





\begin{center}
\renewcommand{\arraystretch}{0.9}
\small
\begin{tabular}{r r r r r r r r r r r r r}
0: & 10 & 24 & 25 & 26 & 28 & 29 & 33 & 37 & 38 & 39 &  &  \\
1: & 7 & 11 & 24 & 25 & 27 & 28 & 30 & 37 & 38 & 39 &  &  \\
2: & 10 & 12 & 23 & 26 & 27 & 29 & 34 & 37 & 38 & 39 &  &  \\
3: & 10 & 13 & 23 & 29 & 31 & 34 & 35 & 36 & 38 & 39 &  &  \\
4: & 12 & 13 & 21 & 23 & 27 & 29 & 32 & 34 & 36 & 38 & 39 &  \\
5: & 11 & 13 & 20 & 21 & 23 & 27 & 35 & 36 & 38 & 39 &  &  \\
6: & 11 & 13 & 20 & 22 & 24 & 25 & 30 & 35 & 38 & 39 &  &  \\
7: & 1 & 9 & 13 & 18 & 22 & 29 & 31 & 33 & 36 &  &  &  \\
8: & 11 & 12 & 19 & 27 & 28 & 29 & 32 & 37 & 39 &  &  &  \\
9: & 7 & 16 & 17 & 20 & 23 & 24 & 27 & 30 & 35 & 38 &  &  \\
10: & 0 & 2 & 3 & 11 & 14 & 18 & 19 & 21 & 30 & 32 &  &  \\
11: & 1 & 5 & 6 & 8 & 10 & 15 & 16 & 17 & 26 & 31 & 34 &  \\
12: & 2 & 4 & 8 & 17 & 20 & 22 & 24 & 25 & 30 & 35 &  &  \\
13: & 3 & 4 & 5 & 6 & 7 & 14 & 15 & 16 & 17 & 26 & 37 &  \\
14: & 10 & 13 & 22 & 24 & 25 & 29 & 31 & 33 & 35 & 38 & 39 &  \\
15: & 11 & 13 & 20 & 21 & 23 & 27 & 30 & 32 & 38 & 39 &  &  \\
16: & 9 & 11 & 13 & 18 & 19 & 22 & 25 & 28 & 29 & 33 & 39 &  \\
17: & 9 & 11 & 12 & 13 & 18 & 19 & 21 & 29 & 32 & 36 & 39 &  \\
18: & 7 & 10 & 16 & 17 & 23 & 24 & 26 & 27 & 34 & 37 & 38 &  \\
19: & 8 & 10 & 16 & 17 & 20 & 23 & 24 & 31 & 34 & 35 & 38 &  \\
20: & 5 & 6 & 9 & 12 & 15 & 19 & 26 & 28 & 29 & 33 & 37 &  \\
21: & 4 & 5 & 10 & 15 & 17 & 24 & 25 & 26 & 28 & 33 & 37 &  \\
22: & 6 & 7 & 12 & 14 & 16 & 23 & 26 & 27 & 32 & 34 & 37 &  \\
23: & 2 & 3 & 4 & 5 & 9 & 15 & 18 & 19 & 22 & 25 & 28 & 33 \\
24: & 0 & 1 & 6 & 9 & 12 & 14 & 18 & 19 & 21 & 32 & 36 &  \\
25: & 0 & 1 & 6 & 12 & 14 & 16 & 21 & 23 & 32 & 34 & 36 &  \\
26: & 0 & 2 & 11 & 13 & 18 & 20 & 21 & 22 & 30 & 35 &  &  \\
27: & 1 & 2 & 4 & 5 & 8 & 9 & 15 & 18 & 22 & 31 & 33 &  \\
28: & 0 & 1 & 8 & 16 & 20 & 21 & 23 & 31 & 34 & 35 & 36 &  \\
29: & 0 & 2 & 3 & 4 & 7 & 8 & 14 & 16 & 17 & 20 & 30 &  \\
30: & 1 & 6 & 9 & 10 & 12 & 15 & 26 & 29 & 31 & 33 & 34 & 36 \\
31: & 3 & 7 & 11 & 14 & 19 & 27 & 28 & 30 & 32 & 37 &  &  \\
32: & 4 & 8 & 10 & 15 & 17 & 22 & 24 & 25 & 31 & 33 & 35 &  \\
33: & 0 & 7 & 14 & 16 & 20 & 21 & 23 & 27 & 30 & 32 &  &  \\
34: & 2 & 3 & 4 & 11 & 18 & 19 & 22 & 25 & 28 & 30 &  &  \\
35: & 3 & 5 & 6 & 9 & 12 & 14 & 19 & 26 & 28 & 32 & 37 &  \\
36: & 3 & 4 & 5 & 7 & 17 & 24 & 25 & 28 & 30 & 37 &  &  \\
37: & 0 & 1 & 2 & 8 & 13 & 18 & 20 & 21 & 22 & 31 & 35 & 36 \\
38: & 0 & 1 & 2 & 3 & 4 & 5 & 6 & 9 & 14 & 15 & 18 & 19 \\
39: & 0 & 1 & 2 & 3 & 4 & 5 & 6 & 8 & 14 & 15 & 16 & 17 \\
\end{tabular}
\end{center}

\end{document}